\newtheorem{thm}{Theorem}[section]
\newtheorem{lem}[thm]{Lemma}
\newtheorem{prop}[thm]{Proposition}
\theoremstyle{definition}
\newtheorem{dfn}[thm]{Definition}
\newtheorem{ques}[thm]{Question}
\newtheorem{claim}{Claim}
\newtheorem*{claim*}{Claim}
\theoremstyle{remark}
\newtheorem*{ac}{Acknowlegments}
\renewcommand{\qedsymbol}{$\blacksquare$}
\numberwithin{equation}{thm}
\def\Ext{\operatorname{\mathsf{Ext}}}
\def\syz{\mathsf{\Omega}}
\def\depth{\operatorname{\mathsf{depth}}}
\def\pd{\operatorname{\mathsf{pd}}}
\def\Hom{\operatorname{\mathsf{Hom}}}
\def\depth{\operatorname{\mathsf{depth}}}
\def\length{\operatorname{\mathsf{length}}}
\def\rank{\operatorname{\mathsf{rank}}}
\def\End{\operatorname{\mathsf{End}}}
\def\Cok{\operatorname{\mathsf{Coker}}}
\def\Ker{\operatorname{\mathsf{Ker}}}
\def\edim{\operatorname{\mathsf{edim}}}
\begin{document}
\allowdisplaybreaks
\setlength{\baselineskip}{15pt}
\title[Ranks of syzygies of modules of finite length]{On a question of Buchweitz about ranks of syzygies of modules of finite length}
\author{Toshinori Kobayashi}
\address{Graduate School of Mathematics, Nagoya University, Furocho, Chikusaku, Nagoya, Aichi 464-8602, Japan}
\email{m16021z@math.nagoya-u.ac.jp}
\thanks{2010 {\em Mathematics Subject Classification.} 13C14, 13D02, 13H10}
\thanks{{\em Key words and phrases.} hypersurface, Gorenstein ring, syzygy}
\begin{abstract}
Let $R$ be a local ring of dimension $d$. Buchweitz asks if the rank of the $d$-th syzygy of a module of finite lengh is greater than or equal to the rank of the $d$-th syzygy of the residue field, unless the module has finite projective dimension. Assuming that $R$ is Gorenstein, we prove that if the question is affrmative, then $R$ is a hypersurface. If moreover $R$ has dimension two, then we show that the converse also holds true.
\end{abstract}
\maketitle
\section{Introduction}
Let $(R,\mathfrak{m},k)$ be a commutative Noetherian local ring with Krull dimension $d$. We consider the rank of the $d$-th syzygy of an $R$-module of finite length. We assume that $R$ has positive depth, so that any $R$-module of finite lengh has a rank. On the ranks of syzygies, Buchweitz asks the following question \cite[Question 11.16]{LW}.

\begin{ques}[Buchweitz] \label{Q}

Does one have the equality 
\begin{equation} \label{Q1}
\rank_R \syz^d k =\min\{\rank_R \syz^d M|\pd_R M=\infty\text{ and }\length_R M<\infty\}?
\end{equation}
\end{ques}

Here we denote by $\syz^d M$ the $d$-th syzygy in the minimal free resolution of a finitely generated $R$-module $M$, and $\pd_R M$ stands for the projective dimension of $M$. If $d=1$, then $\syz^d k$ has rank one, and Question \ref{Q} has an affirmative answer. Therefore, we consider the question for $d\geq 2$. Our main theorem is the following.

\begin{thm} \label{A}
Assume $R$ is Gorenstein and $d\geq 2$. Then Question \ref{Q} is affirmative only if $R$ is a hypersurface.
\end{thm}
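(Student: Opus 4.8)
The plan is to prove the contrapositive: assuming that $R$ is Gorenstein with $\dim R=d\geq 2$ and that $R$ is \emph{not} a hypersurface, I will produce a module $M$ of finite length with $\pd_R M=\infty$ and $\rank_R\syz^d M<\rank_R\syz^d k$, so that the minimum on the right of \eqref{Q1} is strictly less than the left-hand side and the equality fails. The basic tool is the identity $\rank_R\syz^n N=\sum_{i=0}^{n-1}(-1)^{n-1-i}\beta_i^R(N)$, valid for every module $N$ of finite length (chase ranks down the minimal free resolution, starting from $\rank_R N=0$); thus $\rank_R\syz^n N$ depends only on $\beta_0^R(N),\dots,\beta_{n-1}^R(N)$, and, writing $P^R_N(t)=\sum_i\beta_i^R(N)t^i$, one has $P^R_k(t)=1+tP^R_{\mathfrak m}(t)$ and $P^R_{R/J}(t)=1+tP^R_J(t)$ for any ideal $J$. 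We may assume throughout that the residue field of $R$ is infinite, after the standard flat base change $R\rightsquigarrow R[T]_{\mathfrak m R[T]}$, which preserves $\dim$, $\edim$, the Gorenstein property, being a hypersurface, and all Betti numbers of $k$.

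\emph{The case $d=2$.} Here $\edim R\geq 4$, for otherwise $R$ is regular or a Gorenstein ring of codimension one. Pick a nonzerodivisor $x$ that is a minimal generator of $\mathfrak m$; in the one-dimensional Gorenstein ring $R/(x)$ pick a further minimal generator $y$ of $\mathfrak m$ whose image is a nonzerodivisor; and let $z$ be any third minimal generator of $\mathfrak m$. Set $I=(x,y,z)$ and $M=R/I$. Then $M$ has finite length, $x,y,z$ minimally generate $I$, and $\pd_R M=\infty$: indeed $\pd_R(R/(x,y))=2$, while the image of $z$ in the Artinian, non-regular ring $R/(x,y)$ is non-invertible and nonzero (the latter since $x,y,z$ are linearly independent modulo $\mathfrak m^2$), so $R/I=(R/(x,y))/(z)$ has infinite projective dimension over $R/(x,y)$, hence over $R$. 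Because $x,y,z$ extend to a minimal generating set of $\mathfrak m$ we have $I\cap\mathfrak m^2=\mathfrak m I$, and the key point is that this forces the long exact $\mathrm{Tor}$-sequence attached to $0\to I\to\mathfrak m\to\mathfrak m/I\to0$ to split into short exact sequences, i.e.\ $\beta_n^R(\mathfrak m)=\beta_n^R(I)+\beta_n^R(\mathfrak m/I)$ for all $n$. Combining this with the two Poincaré-series relations gives $\beta_n^R(k)-\beta_n^R(M)=\beta_{n-1}^R(\mathfrak m/I)$ for $n\geq 1$, whence, by the rank identity,
\[
\rank_R\syz^m k-\rank_R\syz^m M=\rank_R\syz^{m-1}(\mathfrak m/I)\geq 0\qquad(m\geq 1),
\]
with strict inequality at $m=2$ since $\rank_R\syz^1(\mathfrak m/I)=\mu(\mathfrak m/I)=\edim R-3\geq 1$. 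In particular $\rank_R\syz^2 M<\rank_R\syz^2 k$, which settles $d=2$.

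\emph{Reducing $d\geq 3$ to $d=2$.} Choose, by prime avoidance, a regular sequence $x_1,\dots,x_{d-2}$ that is part of a minimal generating set of $\mathfrak m$, and set $R'=R/(x_1,\dots,x_{d-2})$: it is Gorenstein of dimension $2$, with $\edim R'=\edim R-(d-2)\geq 4$, hence again not a hypersurface. The case $d=2$, applied to $R'$, produces a finite-length $R'$-module $M$ (namely $M=R'/I$ as above) with $\pd_{R'}M=\infty$ and $\rank_{R'}\syz^m_{R'}M\leq\rank_{R'}\syz^m_{R'}k$ for all $m\geq 2$, strictly for $m=2$. View $M$ as an $R$-module; it is of finite length and $\pd_R M=\infty$ (because $\pd_R R'=d-2<\infty$). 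Since the $x_i$ are nonzerodivisors lying outside $\mathfrak m^2$, a standard change-of-rings computation gives $P^R_N(t)=(1+t)^{d-2}P^{R'}_N(t)$, that is $\beta_i^R(N)=\sum_j\binom{d-2}{j}\beta_{i-j}^{R'}(N)$, for every $R'$-module $N$; plugging this into the rank identity yields, both for $N=M$ and for $N=k$,
\[
\rank_R\syz^d_R N=\sum_{j=0}^{d-2}\binom{d-2}{j}\,\rank_{R'}\syz^{\,d-j}_{R'}N .
\]
Subtracting the two instances, $\rank_R\syz^d k-\rank_R\syz^d M$ is a nonnegative-coefficient combination of the differences $\rank_{R'}\syz^{d-j}_{R'}k-\rank_{R'}\syz^{d-j}_{R'}M\geq 0$, and the $j=d-2$ term equals $\rank_{R'}\syz^2_{R'}k-\rank_{R'}\syz^2_{R'}M>0$. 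Hence $\rank_R\syz^d M<\rank_R\syz^d k$, and the equality in \eqref{Q1} fails.

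\emph{Where the difficulty is.} The step I expect to be the main obstacle is the claim, used for $d=2$, that $I\cap\mathfrak m^2=\mathfrak m I$ forces the resolution of $\mathfrak m$ to split compatibly with $0\to I\to\mathfrak m\to\mathfrak m/I\to 0$; it is exactly this splitting that places the syzygy ranks of the test module $M$ uniformly below those of $k$. The hypothesis $I\cap\mathfrak m^2=\mathfrak m I$ is equivalent to injectivity of $I/\mathfrak m I\to\mathfrak m/\mathfrak m^2$, which kills the bottom connecting homomorphism $\mathrm{Tor}_1^R(\mathfrak m/I,k)\to\mathrm{Tor}_0^R(I,k)$ of the long exact sequence; propagating the vanishing of the connecting maps to all homological degrees is where a genuine argument is needed. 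A secondary technical point is the change-of-rings identity $P^R_N(t)=(1+t)^{d-2}P^{R'}_N(t)$ for arbitrary $R'$-modules $N$: it relies on the $x_i$ being minimal generators (it fails when an $x_i$ lies in $\mathfrak m^2$), and its proof requires verifying that the transferred free resolution remains minimal.
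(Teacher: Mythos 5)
Your strategy (directly exhibiting a finite-length module whose $d$-th syzygy has smaller rank than $\syz^d k$, via quotients by ideals generated by part of a minimal generating set and a Nagata-type reduction to dimension two) is genuinely different from the paper's, which instead uses minimal MCM approximations to produce a test module with $\rank_R\syz^d M=\rank_R\syz^{d-1}k+1$ and then shows, via the factorization of the Poincar\'e series through the deviations $\varepsilon_1,\varepsilon_2$, that a non-hypersurface Gorenstein ring has $\rank_R\syz^dk-\rank_R\syz^{d-1}k\geq 2$. However, your argument has a genuine gap at exactly the point you flag yourself: the claim that $I\cap\mathfrak m^2=\mathfrak m I$ forces $\beta_n^R(\mathfrak m)=\beta_n^R(I)+\beta_n^R(\mathfrak m/I)$ for \emph{all} $n$. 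What the hypothesis gives you is only the vanishing of the degree-zero connecting map $\operatorname{Tor}_1^R(\mathfrak m/I,k)\to \operatorname{Tor}_0^R(I,k)$; there is no formal mechanism by which this propagates upward, and the statement you need is equivalent to injectivity of $\operatorname{Tor}_n^R(R/I,k)\to\operatorname{Tor}_n^R(k,k)$ in every degree, i.e.\ to $R\to R/I$ being a ``large'' homomorphism in Levin's sense. That is a substantive theorem-type assertion (true or not for ideals generated by part of a minimal generating system of $\mathfrak m$, it requires a real proof or a citation), and it is precisely the ingredient that makes your reduction of $d\geq 3$ to $d=2$ work, since after rewriting $\rank_R\syz^d_R N=\sum_j\binom{d-2}{j}\rank_{R'}\syz^{d-j}_{R'}N$ you need the comparison $\rank_{R'}\syz^m_{R'}M\leq\rank_{R'}\syz^m_{R'}k$ for all $2\leq m\leq d$, not just $m=2$. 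Note that for $d=2$ the splitting is not needed at all: $\rank_R\syz^2M=\mu(I)-1=2<\edim R-1=\rank_R\syz^2k$ follows from $\beta_0,\beta_1$ alone; so the unproven claim is carrying the entire burden of the higher-dimensional case.

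A secondary but real flaw: your justification of $\pd_RM=\infty$ asserts ``infinite projective dimension over $R/(x,y)$, hence over $R$,'' which is the wrong direction of the general change-of-rings implication (finite projective dimension over $R/(x,y)$ implies finite over $R$, not conversely; e.g.\ $k$ over $k[[t]]$ versus over $k[[t]]/(t^2)$). It can be repaired here because $x,y\notin\mathfrak m^2$, using the same Nagata-type formula $P^R_N(t)=(1+t)P^{R/(x)}_N(t)$ you invoke later (or by a Hilbert--Burch argument: a finite-length cyclic $R/I$ with $\pd_R R/I=2$ would force $I\subseteq\mathfrak m^2$, contradicting $x\notin\mathfrak m^2$), but as written the step appeals to a false general statement. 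In sum, the approach is interesting and plausibly completable, but as it stands the central splitting claim is conceded rather than proved, so the proposal does not yet constitute a proof of Theorem \ref{A}; the paper's route via Proposition \ref{C} and the inequality of Proposition \ref{D} avoids any such Tor-injectivity statement.
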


Here we say that $R$ is a {\it hypersurface} if the $\mathfrak{m}$-adic completion of $R$ is a quotient of a regular local ring by a regular element. This theorem says that if $R$ is a Gorenstein local ring and not a hypersurface, then Question \ref{Q} has a negative answer.

On the other hand we can show the converse of Theorem \ref{A} in the case $d=2$.

\begin{thm} \label{B}
Assume $R$ is Gorenstein and $d=2$. Then Question \ref{Q} is affirmative if and only if $R$ is a hypersurface.
\end{thm}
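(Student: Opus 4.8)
The implication ``Question \ref{Q} is affirmative $\Rightarrow$ $R$ is a hypersurface'' is exactly the case $d=2$ of Theorem \ref{A}, so the genuinely new content of Theorem \ref{B} is the converse, and that is what I will prove: if $R$ is a two-dimensional Gorenstein hypersurface, then the equality \eqref{Q1} holds. If $R$ is regular the set of competitors in \eqref{Q1} is empty and the question is vacuous, so I assume $R$ is singular, whence $\edim R = 3$. Reading off the minimal free resolution of $k$ and using that a finite-length module has rank $0$ (since $\depth R>0$), one gets $\rank_R \syz^2 k = \beta_1(k)-\beta_0(k) = \edim R - 1 = 2$ and $\pd_R k = \infty$. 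As $k$ is itself an admissible competitor in \eqref{Q1}, the whole theorem reduces to the single inequality
\[
\rank_R \syz^2 M \ \geq\ 2 \qquad\text{whenever } \length_R M<\infty \text{ and } \pd_R M=\infty,
\]
for then the minimum in \eqref{Q1} is at least $2$ and is attained by $k$.

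My plan is to prove this inequality over an \emph{arbitrary} two-dimensional Cohen--Macaulay ring; only the matching value $\rank_R\syz^2 k=\edim R-1=2$ will use the hypersurface hypothesis. First I would rewrite the target: chasing ranks through the exact sequences $0\to\syz^{i+1}M\to F_i\to\syz^i M\to 0$ of the minimal resolution gives $\rank_R\syz^2 M=\beta_1(M)-\beta_0(M)$, so I must rule out $\beta_1(M)-\beta_0(M)\in\{0,1\}$. Let $\partial\colon R^{\beta_1}\to R^{\beta_0}$ be the minimal presentation, so that $M=\Cok\partial$ and $\syz^2 M=\Ker\partial$. Finite length of $M$ is equivalent to $\operatorname{Fitt}_0(M)=I_{\beta_0}(\partial)$ being $\mathfrak{m}$-primary, hence of height $2$; and since $R$ is Cohen--Macaulay, $\operatorname{grade} I_{\beta_0}(\partial)=2$ as well.

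The case $\beta_1=\beta_0$ is impossible: then $\partial$ is square, $I_{\beta_0}(\partial)=(\det\partial)$ is principal and so of height at most $1$, which cannot be $\mathfrak{m}$-primary (and $\det\partial=0$ would force $M$ to have positive rank). The case $\beta_1=\beta_0+1$ is the heart of the matter: here the generic height of the ideal of maximal minors is $\beta_1-\beta_0+1=2$, which the grade now meets, so the Buchsbaum--Rim (Hilbert--Burch) complex
\[
0\to R\xrightarrow{\ d\ }R^{\beta_0+1}\xrightarrow{\ \partial\ }R^{\beta_0}\to M\to 0,
\]
with $d$ the vector of signed maximal minors, is acyclic. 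Because $I_{\beta_0}(\partial)$ has positive grade the map $d$ is injective, so $\syz^2 M=\Ker\partial=\operatorname{im}d\cong R$ is free and $\pd_R M\le 2$, contradicting $\pd_R M=\infty$. Hence $\beta_1(M)-\beta_0(M)\geq 2$, which is the inequality I wanted.

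I expect the delicate point to be the step $\beta_1=\beta_0+1$: one must be certain that maximal grade forces the second syzygy to be \emph{free}, not merely small. The safeguard is the acyclicity criterion for the Buchsbaum--Rim complex together with the Cohen--Macaulay identity $\operatorname{grade}=\height$, which converts ``$M$ has finite length'' into the precise numerical grade condition required. I would also underline the structural picture this yields: the bound $\rank_R\syz^2 M\ge 2$ persists over every two-dimensional Gorenstein ring, whereas $\rank_R\syz^2 k=\edim R-1$ exceeds $2$ as soon as $R$ fails to be a hypersurface; this is exactly the mechanism by which \eqref{Q1} breaks down in the non-hypersurface case, in agreement with Theorem \ref{A}.
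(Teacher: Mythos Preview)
Your proof is correct and follows essentially the same route as the paper: both arguments observe that the Fitting ideal $I_{\beta_0}(\partial)$ is $\mathfrak m$-primary of grade $d=2$, then invoke the Buchsbaum--Rim acyclicity criterion to rule out $\beta_1-\beta_0=1$ (forcing $\pd_R M\le 2$), and finish by checking $\rank_R\syz^2 k=\edim R-1=2$ for a singular hypersurface. The only cosmetic difference is that the paper disposes of the case $\beta_1=\beta_0$ via the Eagon--Northcott height bound $\height I_{\beta_0}(\partial)\le \beta_1-\beta_0+1$ (which in fact yields the general inequality $\rank_R\syz^2 M\ge d$ over any $d$-dimensional Cohen--Macaulay ring), whereas you handle that case directly with Krull's principal ideal theorem.
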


This paper is organized as follows. In Section 2, we give a necessary condition for the equality (\ref{Q1}) over a Gorenstein ring. In Section 3, we consider the Poincar\'e series of $k$, and prove Theorem \ref{A} by using the necessary condition obtained in Section 2. Section 4 is devoted to proving Theorem \ref{B} by using the notion of Buchsbaum-Rim complexes.

\section{A necessary condition for (\ref{Q1})}

Throughout this section, $(R,\mathfrak{m},k)$ is a Gorenstein local ring of dimension $d>0$. To prove Theorem \ref{A}, we use the following result which provides a necessary condition for the equality (\ref{Q1}) to hold true.

\begin{prop} \label{C}
There is an $R$-module $M$ with $\pd_R M=\infty$, $\length_R M<\infty$, and $\rank_R\syz^d M=\rank_R\syz^{d-1} k+1$. Thus if Question \ref{Q} is affirmative, then there is an inequality
\begin{equation} \label{C0}
\rank_R \syz^d k\leq \rank_R \syz^{d-1} k+1.
\end{equation}
\end{prop}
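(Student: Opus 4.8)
The plan is to take for $M$ a cyclic module linked to $k$. Assume $R$ is not regular (otherwise there is nothing to prove). Fix a sufficiently general system of parameters $\mathbf x = x_1,\dots,x_d$ of $R$ --- a maximal $R$-regular sequence, since $R$ is Cohen--Macaulay --- put $\bar R := R/(\mathbf x)$, and set
\[
M := R\big/\big((\mathbf x):_R\mathfrak m\big)\;\cong\;\bar R/\operatorname{soc}(\bar R).
\]
Since $\bar R$ is Artinian Gorenstein and $R$ is not regular, $\bar R\neq k$ and $\operatorname{soc}(\bar R)\cong k$ is nonzero, so $M\neq0$, $M$ is not $\bar R$-free, $\pd_{\bar R}M=\infty$, and hence $\pd_R M=\infty$ (for an $\bar R$-module, finiteness of projective dimension over $R$ and over $\bar R$ are equivalent). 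The ideal $(\mathbf x):_R\mathfrak m=(\mathbf x)+Rs$, for $s$ a lift of a socle generator of $\bar R$, is $\mathfrak m$-primary (so $\length_R M<\infty$), minimally $(d+1)$-generated, and of rank $1$. In particular the case $d=2$ is already settled: there $M=R/\mathfrak a$ with $\mathfrak a$ a three-generated $\mathfrak m$-primary ideal of infinite projective dimension, and $\rank_R\syz^2 M=\mu_R(\mathfrak a)-\rank_R\mathfrak a=3-1=2=\rank_R\syz^1 k+1$.

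For general $d$ the plan is to compute the partial Euler characteristic $\rank_R\syz^d M=\sum_{i=0}^{d-1}(-1)^{d-1-i}\beta_i^R(M)$, which is legitimate because $M$ has finite length. I would read off the relevant Betti numbers from the defining short exact sequence $0\to k\to\bar R\to M\to0$: if $G_\bullet\to k$ is the minimal free resolution, $K_\bullet\to\bar R$ the Koszul resolution on $\mathbf x$ (of length $d$, with $K_i=R^{\binom di}$), and $g_\bullet\colon G_\bullet\to K_\bullet$ a chain map lifting $k\hookrightarrow\bar R$, then the mapping cone $C_\bullet=\operatorname{cone}(g_\bullet)$, with $C_i=K_i\oplus G_{i-1}$, is a free resolution of $M$. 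Already this forces
\[
\rank_R\syz^d M\le\sum_{i=0}^{d-1}(-1)^{d-1-i}\big(\tbinom di+\beta_{i-1}^R(k)\big)=1+\rank_R\syz^{d-1}k,
\]
using the identity $\sum_{i=0}^{d-1}(-1)^{d-1-i}\binom di=\binom{d-1}{d-1}=1$ together with $\rank_R\syz^{d-1}k=\sum_{j=0}^{d-2}(-1)^{d-2-j}\beta_j^R(k)$; this upper bound is all that is needed for the final claim. To obtain the equality stated, one must know $\beta_i^R(M)=\binom di+\beta_{i-1}^R(k)$ for $i\le d-1$, i.e.\ that $C_\bullet$ is minimal through degree $d$, equivalently that the maps $\operatorname{Tor}^R_i(k,k)\to\operatorname{Tor}^R_i(\bar R,k)$ induced by $k\hookrightarrow\bar R$ vanish for $i\le d-1$.

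Establishing that vanishing is the main obstacle. The case $i=0$ is clear, as $k\hookrightarrow\bar R$ has image in $\mathfrak m\bar R$; the case $i=1$ amounts to $s\mathfrak m\subseteq\mathfrak m(\mathbf x)$, which I would arrange by choosing the parameters $\mathbf x$ general (so that their initial forms behave well with respect to the $\mathfrak m$-adic filtration), and the higher $i$ are analogous bookkeeping with the chain map $g_\bullet$. Once this is in place the partial Euler characteristic computes to exactly $\rank_R\syz^{d-1}k+1$. Finally, if Question~\ref{Q} is affirmative then $\rank_R\syz^d k=\min\{\rank_R\syz^d N:\pd_R N=\infty,\ \length_R N<\infty\}\le\rank_R\syz^d M=\rank_R\syz^{d-1}k+1$, which is the inequality \eqref{C0}.
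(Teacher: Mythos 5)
Your mapping-cone bound is fine, and it gives a genuinely slicker route to the displayed inequality \eqref{C0} than the paper's: since any free resolution of $M$ is the direct sum of the minimal one and a split exact complex, the truncated Euler characteristic of the cone does bound $\rank_R\syz^dM$ from above, and together with $\pd_RM=\infty$ this yields \eqref{C0} exactly as you say. The genuine gap is the equality asserted in the Proposition for $d\ge3$. From the long exact sequence of $0\to k\to\bar R\to M\to0$ one gets $\beta_i^R(M)=\bigl(\tbinom{d}{i}-r_i\bigr)+\bigl(\beta_{i-1}^R(k)-r_{i-1}\bigr)$, where $r_i$ is the rank of $\alpha_i\colon\operatorname{Tor}_i^R(k,k)\to\operatorname{Tor}_i^R(\bar R,k)$, and the alternating sum telescopes to $\rank_R\syz^dM=\rank_R\syz^{d-1}k+1-r_{d-1}$. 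So what you must prove is precisely $\alpha_{d-1}=0$ (only this one index matters, not all $i\le d-1$), and that is the crux of your construction, not ``analogous bookkeeping.'' For $d=2$ it does follow from $\pd_RM=\infty$, since $\beta_1(M)=\mu((\mathbf x):_R\mathfrak m)$ would otherwise be $d$, making the ideal a parameter ideal of finite projective dimension; but for $d\ge3$ you offer no argument, and it is not clear that choosing the parameters ``general'' forces the vanishing. The paper sidesteps this issue entirely by working with a different module, $\syz^{d-1}_{R/(\mathbf x)}k$: it shows its minimal MCM approximation coincides with that of $\syz^{d-1}_Rk$ (via delta invariants and an epimorphism $\syz^{d-1}_Rk\to M$), computes the rank of that approximation to be $\rank_R\syz^{d-1}k+1$, and then takes $\Ext^d_R(M,R)$ to produce the module realizing the equality. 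As written, your proposal proves \eqref{C0} but not the first assertion of the Proposition when $d\ge3$.

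A secondary, repairable point: your justification of $\pd_RM=\infty$ invokes ``for an $\bar R$-module, finiteness of projective dimension over $R$ and over $\bar R$ are equivalent,'' which is false for an arbitrary regular sequence; e.g.\ for $R=k[[t]]$, $\mathbf x=t^2$ and $N=k$ one has $\pd_RN=1$ but $\pd_{\bar R}N=\infty$. The equivalence does hold once the parameters are chosen with $x_i\notin\mathfrak m^2+(x_1,\dots,x_{i-1})$ (Nagata's theorem, applied $d$ times), which is exactly the choice made in the paper's proof of its Proposition; so you should state this condition explicitly rather than appeal to ``sufficiently general'' parameters, and note that the minimal generation $\mu((\mathbf x):_R\mathfrak m)=d+1$ you use in the $d=2$ case is a consequence of, not independent of, this infiniteness of projective dimension.
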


In the rest of this section, we prove this proposition. First, we state the definition of a (minimal) MCM approximation.

\begin{dfn} (see \cite[Chap. 11, Section 2]{LW})
For a finitely generated $R$-module $M$, an {\it MCM approximation} of $M$ is a pair $(X,p)$ of a maximal Cohen-Macaulay $R$-module $X$ and a surjective homomorphism $p:X\rightarrow M$ with $\pd_R (\Ker p)<\infty$.
An MCM approximation $(X,p)$ of $M$ is called {\it minimal} if every $\phi\in\End_R(X)$ with $p\circ\phi=p$ is an automorphism.
\end{dfn}

Since $R$ is Gorenstein, an (minimal) MCM approximaion exists for any finitely generated $R$-module. We remark that an MCM approximaion of $M$ is unique up to free summands, and a minimal MCM approximation of $M$ is unique up to isomorphism. We denote by $X_M$ the maximal Cohen-Macaulay $R$-module in the minimal MCM approximation of $M$.

For an $R$-module $M$ of finite length, we can construct $X_M$ from the Matlis dual of $M$ as follows (see the proof of \cite[Proposition 11.15]{LW}).

\begin{lem} \label{C1}
Let $M$ be an $R$-module of finite length. Then $X_M\cong\Hom_R(\syz^d \Ext^d_R(M,R),R)$. In particular, $\rank_R X_M=\rank_R \syz^d \Ext^d_R(M,R).$
\end{lem}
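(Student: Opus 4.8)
The plan is to construct the minimal MCM approximation of the finite-length module $M$ by hand, from the $R$-dual of a syzygy module, exactly as in the proof of \cite[Proposition 11.15]{LW}. First I would replace $M$ by $N:=\Ext^d_R(M,R)$. Since $M$ has finite length and $R$ is Gorenstein of dimension $d$, local duality shows that $\Ext^i_R(M,R)=0$ for $i\neq d$, that $N$ again has finite length, and that $\Ext^d_R(N,R)$ is the double Matlis dual of $M$, hence $\Ext^d_R(N,R)\cong M$. So it suffices to identify the minimal MCM approximation of $M\cong\Ext^d_R(N,R)$.

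Next I would take the minimal free resolution $F_\bullet\to N$ and truncate it to the exact sequence $0\to\syz^dN\to F_{d-1}\to\cdots\to F_0\to N\to 0$. As a $d$-th syzygy over the $d$-dimensional Cohen--Macaulay ring $R$, the module $\syz^dN$ is maximal Cohen--Macaulay, so $\Ext^j_R(\syz^dN,R)=0$ for $j>0$; also $\Ext^i_R(N,R)=0$ for $i<d$. Applying $(-)^{*}:=\Hom_R(-,R)$ to the truncated sequence and dimension-shifting through the short exact sequences $0\to\syz^{i+1}N\to F_i\to\syz^iN\to 0$, one finds that the dual complex is exact except at its right-hand end, producing an exact sequence
$$0\to(\syz^{d-1}N)^{*}\to F_{d-1}^{*}\xrightarrow{q}(\syz^dN)^{*}\xrightarrow{p}\Ext^d_R(N,R)\to 0,$$
and at the same time exhibiting $(\syz^{d-1}N)^{*}$ --- and therefore, from the left-hand part, $\Ker p$ as well --- as a module of finite projective dimension. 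Since $\Ext^d_R(N,R)\cong M$, the map $p$ surjects onto $M$; and $(\syz^dN)^{*}$ is maximal Cohen--Macaulay, being the $R$-dual of an MCM module over a Gorenstein ring. Hence $\bigl((\syz^dN)^{*},p\bigr)$ is an MCM approximation of $M$.

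It remains to check that this approximation is \emph{minimal}, i.e.\ that $(\syz^dN)^{*}\cong X_M$. Since the minimal MCM approximation of $M$ is a direct summand, with free complement mapping to $0$, of every MCM approximation of $M$, it is enough to show that $(\syz^dN)^{*}$ has no nonzero free direct summand contained in $K:=\Ker p$. Suppose $Rg_0$ were such a summand. As $\syz^dN$ is reflexive (an MCM module over a Gorenstein ring is reflexive), the projection $(\syz^dN)^{*}\twoheadrightarrow Rg_0\cong R$ is evaluation at some $w\in\syz^dN$, whence $g_0(w)=1$. On the other hand $g_0$ lies in the image of $q$, so $g_0=f|_{\syz^dN}$ for some $f\in\Hom_R(F_{d-1},R)$, while minimality of the resolution $F_\bullet$ forces $\syz^dN\subseteq\mathfrak{m}F_{d-1}$; therefore $g_0(w)=f(w)\in\mathfrak{m}$, contradicting $g_0(w)=1$. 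Thus $(\syz^dN)^{*}\cong X_M$, and consequently $\rank_R X_M=\rank_R(\syz^dN)^{*}=\rank_R\syz^dN$, the last equality because $\Hom_R(-,R)$ preserves ranks (check after localizing at the minimal primes of $R$, where $R$ is Artinian and $\syz^dN$ becomes free).

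The step I expect to be the real obstacle is the verification of minimality. The naive hope --- that $p$ should induce an isomorphism $(\syz^dN)^{*}\otimes_R k\xrightarrow{\sim}M\otimes_R k$, equivalently $\Ker p\subseteq\mathfrak{m}(\syz^dN)^{*}$ --- is already false for $d=1$, so one must instead use the finer ``no free summand inside $\Ker p$'' criterion above, which is exactly where the minimality of the chosen resolution of $N$ and the reflexivity of $\syz^dN$ enter. The homological bookkeeping in the construction (which $\Ext$ groups vanish, and the identification $\Ext^d_R(N,R)\cong M$) also requires some care but is routine.
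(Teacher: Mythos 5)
Your proposal is correct and follows essentially the same route as the paper, which proves this lemma simply by pointing to the proof of \cite[Proposition 11.15]{LW}: namely, dualizing the minimal free resolution of $N=\Ext^d_R(M,R)$ to obtain the MCM approximation $(\syz^d N)^{*}\to\Ext^d_R(N,R)\cong M$. Your added verification of minimality (no free summand of $(\syz^d N)^{*}$ inside $\Ker p$, via reflexivity of $\syz^d N$ and $\syz^d N\subseteq\mathfrak{m}F_{d-1}$) is sound and matches the cited argument, so there is nothing to correct.
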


The rank of the minimal MCM approximation of $\syz^{d-1} k$ is computed from that of $\syz^{d-1} k$.

\begin{lem} \label{C4}
One has $\rank_R X_{\syz^{d-1}k}=\rank_R \syz^{d-1}k+1$.
\end{lem}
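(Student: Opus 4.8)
The plan is to construct the minimal MCM approximation of $\syz^{d-1}k$ directly, realizing it as a nonsplit extension of $\syz^{d-1}k$ by a free module of rank one. Everything hinges on the computation
\[
\Ext^1_R(\syz^{d-1}k,R)\;\cong\;\Ext^d_R(k,R)\;\cong\;k,
\]
in which the first isomorphism is dimension shifting along the minimal free resolution of $k$ (it is vacuous when $d=1$) and the second holds because $R$ is Gorenstein of dimension $d$. Since this group is nonzero, I can fix a nonsplit short exact sequence
\[
0\to R\to X\xrightarrow{\,p\,}\syz^{d-1}k\to 0
\]
whose extension class is a generator of $\Ext^1_R(\syz^{d-1}k,R)\cong k$.

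The heart of the argument, and the step I expect to be the main obstacle, is to prove that $X$ is maximal Cohen-Macaulay. For this I would use the standard criterion over a Gorenstein local ring: a nonzero finitely generated module $X$ is maximal Cohen-Macaulay if and only if $\Ext^i_R(X,R)=0$ for all $i>0$. Applying $\Hom_R(-,R)$ to the sequence above and combining $\Ext^i_R(R,R)=0$ for $i>0$, the dimension-shift isomorphism $\Ext^i_R(\syz^{d-1}k,R)\cong\Ext^{i+d-1}_R(k,R)$, and the Gorenstein vanishing $\Ext^{>d}_R(k,R)=0$, one reads off $\Ext^i_R(X,R)=0$ at once for $i\geq 2$. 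The case $i=1$ is exactly where non-splitness is used: the long exact sequence identifies $\Ext^1_R(X,R)$ with the cokernel of the connecting map $\partial\colon\Ext^0_R(R,R)\to\Ext^1_R(\syz^{d-1}k,R)$, and $\partial$ sends the identity of $R$ to the class of the extension, which we arranged to be a generator of $\Ext^1_R(\syz^{d-1}k,R)\cong k$; hence $\partial$ is surjective and $\Ext^1_R(X,R)=0$. Thus $\Ext^{>0}_R(X,R)=0$, so $X$ is maximal Cohen-Macaulay.

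It remains to recognize $(X,p)$ as the minimal MCM approximation of $\syz^{d-1}k$ and to compare ranks. Since $\Ker p\cong R$ has projective dimension zero, $(X,p)$ is an MCM approximation. For minimality, I would use that the minimal MCM approximation is obtained from any MCM approximation by splitting off the free direct summands of the middle module that are contained in the kernel; here $\Ker p\cong R$ has rank one, so a free summand of $X$ contained in $\Ker p$ would have to be all of $\Ker p$, and splitting it off would split the sequence, contrary to our choice. Hence $X\cong X_{\syz^{d-1}k}$, and additivity of rank along the displayed exact sequence gives
\[
\rank_R X_{\syz^{d-1}k}=\rank_R R+\rank_R\syz^{d-1}k=\rank_R\syz^{d-1}k+1,
\]
which is the assertion. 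Apart from the maximal Cohen-Macaulayness of $X$, the one delicate point is this last reduction: it is precisely non-splitness that keeps the rank-one free summand from being absorbed into the approximation.
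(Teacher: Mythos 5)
Your proof is correct, and it rests on the same computation as the paper's --- the identification $\Ext^1_R(\syz^{d-1}k,R)\cong\Ext^d_R(k,R)\cong k$ --- but it packages it differently. The paper disposes of the lemma in two lines by citing \cite[Proposition 11.21]{LW}: since $\syz^{d-1}k$ has depth $d-1$, its minimal MCM approximation has free kernel of rank $\length_R\Ext^1_R(\syz^{d-1}k,R)=\length_R\Ext^d_R(k,R)=1$. You instead reprove exactly the special case of that proposition that is needed: you take the extension $0\to R\to X\to\syz^{d-1}k\to 0$ attached to a nonzero class in $\Ext^1_R(\syz^{d-1}k,R)\cong k$, verify that $X$ is maximal Cohen--Macaulay via the Ext-vanishing criterion over a Gorenstein ring (nonsplitness giving surjectivity of the connecting map, hence $\Ext^1_R(X,R)=0$), and then argue minimality. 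All of this is sound; the one place where you still lean on the Auslander--Buchweitz/Leuschke--Wiegand comparison machinery is the minimality step: the fact that an arbitrary MCM approximation decomposes as the minimal one plus a trivial complex $0\to F\to F\to 0$ with $F$ free and contained in the kernel (one lifts the minimal approximation into yours, splits off its image, and observes that the complementary summand lies in $\Ker p$ and, being an MCM direct summand of a module of finite projective dimension, is free). Granting that standard fact --- which the paper also takes from the same source when it asserts uniqueness up to free summands --- your identification $X\cong X_{\syz^{d-1}k}$ and the resulting count $\rank_R X_{\syz^{d-1}k}=\rank_R\syz^{d-1}k+1$ are correct. What your route buys is a self-contained explanation of why the kernel is free of rank one; what the paper's buys is brevity, since the cited proposition already contains your construction in general form.
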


\begin{proof}
Since $M:=\syz^{d-1} k$ has depth $d-1$, we have a short exact sequence
\[
0\to R^{\oplus r}\to X_M \to M \to 0
\]
and $r=\length_R\Ext^1_R(M,R)=\length_R\Ext^d_R(k,R)=1$ by \cite[Proposition 11.21]{LW}. Thus we have $\rank_R X_M=\rank_R M+1$.
\end{proof}

The rank of a maximal free summand of $X_M$ is called the{\it (Auslander) delta invariant} of $M$ and denoted by $\delta_R(M)$. We note that $\delta_R(M)$ is well-defined without the Krull-Schmidt property of finitely generated $R$-modules. We give some properties of delta invariants in the next lemma.

\begin{lem} \label{C2}
Let $M,N$ be finitely generated $R$-modules. The following hold.
\begin{itemize}
\item[(1)] If there exists a surjective homomorphism $M\rightarrow N$, then $\delta_R(M)\geq \delta_R(N)$.
\item[(2)] If $R$ is not regular, then $\delta_R(\syz^i k)=0$ for all $i\geq 0$.
\end{itemize}
\end{lem}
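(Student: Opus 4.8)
The plan is to argue through the minimal MCM approximation $0\to Y_M\to X_M\xrightarrow{p_M}M\to 0$ of $M$ (so $X_M$ is maximal Cohen-Macaulay and $Y_M:=\Ker p_M$ has finite projective dimension), using three facts: by definition $\delta_R(M)$ is the rank of a maximal free summand of $X_M$; over the Gorenstein ring $R$ one has $\Ext^i_R(Z,P)=0$ for all $i>0$ whenever $Z$ is maximal Cohen-Macaulay and $\pd_R P<\infty$ (reduce to $\Ext^i_R(Z,R)=0$ by induction on $\pd_R P$); and the rank of a maximal free summand is additive on finite direct sums (this needs only the local cancellation $R\oplus A\cong R\oplus B\Rightarrow A\cong B$, not Krull-Schmidt).

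For (1), let $M\to N$ be surjective and let $0\to Y_N\to X_N\xrightarrow{p_N}N\to 0$ be the minimal MCM approximation of $N$. The composite $X_M\xrightarrow{p_M}M\to N$ is onto, and since $\Ext^1_R(X_M,Y_N)=0$ it lifts through $p_N$ to a homomorphism $g\colon X_M\to X_N$ with $p_N\circ g=(M\to N)\circ p_M$. The first thing to settle is that $g$ may be chosen surjective: one has $\operatorname{Im}g+Y_N=X_N$ because $p_N\circ g$ is onto, and one then adjusts $g$ by a homomorphism $X_M\to Y_N$ and applies Nakayama's lemma together with the minimality of $(X_N,p_N)$. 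Once $g$ is surjective, write $X_N\cong R^{\,n}\oplus X_N'$ with $n=\delta_R(N)$; then $X_M\xrightarrow{g}X_N\to R^{\,n}$ is a split surjection, so $R^{\,n}$ is a direct summand of $X_M$, whence $\delta_R(M)\ge n=\delta_R(N)$.

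For (2), since $\delta_R(\syz^i k)$ is the rank of a maximal free summand of $X_{\syz^i k}$, it suffices to show $X_{\syz^i k}$ has no nonzero free summand. The case $i=0$ is the classical fact (essentially due to Auslander; see \cite{LW}) that $\delta_R(k)=0$ unless $R$ is regular: by (1), $\delta_R(k)\le\delta_R(R)=1$, and the substance is that $\delta_R(k)=1$ forces $\pd_R k<\infty$. For $i\ge 1$, apply $\syz^1$ repeatedly to the minimal MCM approximation $0\to Y_k\to X_k\to k\to 0$: the horseshoe lemma turns it into an MCM approximation of $\syz^i k$ whose Cohen-Macaulay term is $\syz^i_R X_k$ up to free summands. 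Now $X_k$ is maximal Cohen-Macaulay over the Gorenstein ring, so $\Ext^1_R(X_k,R)=0$; hence if $\syz^1_R X_k$ had a free summand $R$, then extending the corresponding retraction to the minimal free cover $F$ of $X_k$ (possible precisely because $\Ext^1_R(X_k,R)=0$) would realize $R$ as a direct summand of $F$ lying inside $\mathfrak m F$, which is absurd. Thus $\syz^1_R X_k$, and inductively $\syz^i_R X_k$, has no free summand; passing from the above MCM approximation of $\syz^i k$ to the minimal one---i.e.\ deleting its free summands---then yields $\delta_R(\syz^i k)=0$.

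The step I expect to be the main obstacle is, in (1), showing that the lifted map $g\colon X_M\to X_N$ can be taken surjective; everything else in (1) is formal once this is available. The analogous delicate point in (2) is the bookkeeping needed to pass from the horseshoe-produced MCM approximation of $\syz^i k$ to the minimal one without losing control of the free summands. Both are routine manipulations within the MCM-approximation formalism of \cite{LW}, but they must be carried out carefully around the notion of minimality.
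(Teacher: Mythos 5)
Both of the steps you yourself flag as ``the main obstacle'' are not routine points of care within the approximation formalism --- they are where the argument actually breaks. (For what it is worth, the paper does not prove this lemma at all; it cites \cite[Prop.~11.28]{LW} and \cite[Prop.~5.7]{AR}, so the substance you are trying to supply is exactly the substance of those references.) In (1), the claim that the lift $g\colon X_M\to X_N$ can be adjusted to become surjective is false, not merely delicate. Minimality of an MCM approximation does \emph{not} mean $Y_N\subseteq\mathfrak{m}X_N$ (that is the criterion for minimal \emph{free} covers), so Nakayama has nothing to act on. Concretely, take $R=k[[t^2,t^3]]$, $M=R/(t^2)$, $N=k$. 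The sequence $0\to t^2R\to R\to M\to 0$ is the minimal MCM approximation of $M$, so $X_M=R$, while $X_N=X_k\cong\Hom_R(\mathfrak{m},R)$ is the normalization $k[[t]]$, which needs two generators over $R$. Lifts $g\colon R\to k[[t]]$ of $M\to N$ exist (the source is free) and satisfy $\operatorname{Im}g+Y_N=X_N$, but no homomorphism $R\to k[[t]]$ is surjective, and adding maps into $Y_N$ cannot change this. What your argument actually needs --- that the composite of $g$ with the projection onto the free part $R^{\delta_R(N)}$ of $X_N$ is a split surjection --- is true, but proving it is precisely the content of \cite[Prop.~11.28]{LW}; it does not follow from minimality plus Nakayama.

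In (2), the horseshoe step and the no-free-summand claim for $\syz^i_RX_k$ are fine (the latter via $\Ext^1_R(X_k,R)=0$), but the final passage ``delete the free summands to reach the minimal approximation, hence $\delta_R(\syz^ik)=0$'' is a non sequitur: knowing $X_{\syz^ik}$ agrees with $\syz^i_RX_k$ only up to free summands does not force the free summands introduced by the horseshoe to be the trivial ones that disappear upon minimalization --- they can survive inside the minimal approximation. Indeed, your argument uses nothing special about $k$, and run with $M=R/(t^2)$ over $R=k[[t^2,t^3]]$ it would ``prove'' $\delta_R(\syz M)=0$: here $X_M=R$, $\syz_RX_M=0$ has no free summand, and $X_{\syz M}$ agrees with it up to free summands, yet $\syz M=t^2R\cong R$ and $\delta_R(\syz M)=1$. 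So the vanishing $\delta_R(\syz^ik)=0$ for non-regular $R$ is a genuine theorem about the residue field --- Auslander's theorem for $i=0$ (which you also only quote, and which carries the real content that $\delta_R(k)=1$ forces regularity) and \cite[Prop.~5.7]{AR} for $i\geq1$ --- and cannot be extracted from the uniqueness-up-to-free-summands bookkeeping alone. As it stands, the proposal proves neither part.
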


\begin{proof}
See \cite[Proposition 11.28]{LW} and \cite[Proposition 5.7]{AR} respectively.
\end{proof}

The following proposition plays a key role in the proof of Proposition \ref{C}.
\begin{prop} \label{C3}
There is an $R$-module $M$ with $\pd_R M=\infty$, $\length_R M<\infty$, and $X_M\cong X_{\syz^{d-1} k}$.
\end{prop}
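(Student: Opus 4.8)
The plan is to realize $X_{\syz^{d-1}k}$ as the MCM module in the minimal MCM approximation of a module of finite length, and then to verify that this finite-length module cannot have finite projective dimension. The natural candidate comes from Lemma \ref{C1}: if $M$ has finite length, then $X_M \cong \Hom_R(\syz^d\Ext^d_R(M,R),R)$. So I want to engineer a finite-length module $M$ whose $\Ext^d_R(M,R)$ has $d$-th syzygy matching that of $\syz^{d-1}k$ up to the duality. Since $\syz^{d-1}k$ is itself a $(d-1)$-st syzygy, a reasonable first try is to take $M$ to be a finite-length module sitting inside an exact sequence built from the minimal free resolution of $k$, so that $\Ext^d_R(M,R)$ is controlled by the dual of that resolution.

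Concretely, first I would start from the minimal free resolution of $k$ and truncate: let $N = \syz^{d-1}k$, so there is an exact sequence $0 \to N \to F_{d-2} \to \cdots \to F_0 \to k \to 0$ with each $F_i$ free, and $N$ is a $(d-1)$-st syzygy, hence $\depth_R N = d-1$ (assuming $R$ is not regular, which we may, since otherwise everything is trivial and $\syz^{d-1}k$ is free). By Lemma \ref{C4} we already know $\rank_R X_N = \rank_R N + 1$, and by Lemma \ref{C1} applied to a suitable finite-length companion this rank should match. The key is to find the right $M$: I expect $M$ to be the cokernel of the map $R \to X_N$ appearing in the exact sequence $0 \to R^{\oplus r} \to X_N \to N \to 0$ of Lemma \ref{C4} with $r=1$ — but that gives $N$ back, not a finite-length module. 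Instead, the correct move is to apply Matlis duality: set $M = \Hom_R(\syz^d(N^\dagger), E)$ is the wrong direction too. Let me restate the real strategy: take $L$ to be any finite-length module with $\Ext^d_R(L,R) \cong \syz^{-(\text{something})}$ of $N$; more cleanly, since $\syz^{d-1}k$ has depth $d-1$, the module $\Ext^1_R(\syz^{d-1}k, R)$ is the only nonvanishing $\Ext$ in positive degree and has length $1$, and dualizing the sequence $0\to R \to X_N \to N \to 0$ shows $N^\dagger := \Hom_R(N,R)$ fits in $0 \to N^\dagger \to X_N^\dagger \to R \to \Ext^1_R(N,R) \to \cdots$. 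The finite-length module I want is then recovered via Lemma \ref{C1} by taking $M$ of finite length with $\Ext^d_R(M,R) \cong N^\dagger$ appropriately syzygied; such an $M$ exists because over a Gorenstein ring the functor $\Ext^d_R(-,R)$ induces a duality between finite-length modules and a suitable category, so one can choose $M = \Ext^d_R(\syz^d(N^\dagger), R)$-type construction or, most directly, $M$ = the $d$-th cosyzygy assembled from the dual resolution of $N$.

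After producing $M$ with $X_M \cong X_N = X_{\syz^{d-1}k}$ and $\length_R M < \infty$, I would finish by ruling out $\pd_R M < \infty$. This is the step where Lemma \ref{C2} enters: if $\pd_R M$ were finite, then $X_M$ would be free, so $X_{\syz^{d-1}k}$ would be free, forcing $\delta_R(\syz^{d-1}k) = \rank_R X_{\syz^{d-1}k} = \rank_R\syz^{d-1}k + 1 > 0$, contradicting Lemma \ref{C2}(2) (here we use that $R$ is not regular; the regular case is handled separately and trivially since then $\syz^{d-1}k$ is free and $d\geq 1$ makes the statement vacuous or immediate).

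The main obstacle I anticipate is the explicit construction of $M$: matching $X_M$ exactly with $X_{\syz^{d-1}k}$ — not merely up to free summands — requires care, since Lemma \ref{C1} identifies $X_M$ with $\Hom_R(\syz^d\Ext^d_R(M,R), R)$ only up to the minimality of the approximation. I will need to check that the MCM approximation obtained is genuinely minimal, i.e. that no free summand splits off, which is exactly where $\delta_R(\syz^{d-1}k) = 0$ is used a second time, ensuring $X_N$ has no free summand and so the isomorphism $X_M \cong X_N$ is forced rather than merely stable. Once that bookkeeping is done, Proposition \ref{C} follows immediately: the module $M$ from Proposition \ref{C3} has $\rank_R \syz^d M = \rank_R X_M = \rank_R X_{\syz^{d-1}k} = \rank_R\syz^{d-1}k + 1$ by Lemma \ref{C4}, and if Question \ref{Q} is affirmative this rank is at least $\rank_R\syz^d k$, giving \eqref{C0}.
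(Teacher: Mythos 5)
There is a genuine gap: you never actually construct the finite-length module $M$. Your plan is to run Lemma \ref{C1} in reverse, i.e.\ to find a finite-length module $L$ with $\Hom_R(\syz^d L,R)\cong X_{\syz^{d-1}k}$ (equivalently $\syz^d L\cong \Hom_R(X_{\syz^{d-1}k},R)$ up to free summands) and then take $M=\Ext^d_R(L,R)$. But the existence of such an $L$ is precisely the nontrivial content of the proposition, and the proposal only gestures at it: the candidates you float (``$\Hom_R(\syz^d(N^\dagger),E)$'', ``$\Ext^d_R(\syz^d(N^\dagger),R)$-type construction'', ``the $d$-th cosyzygy assembled from the dual resolution of $N$'') are neither pinned down nor shown to have finite length --- cosyzygies of $N=\syz^{d-1}k$ or of $N^\dagger=\Hom_R(N,R)$ are maximal Cohen--Macaulay-type objects, not finite-length modules, and Matlis/local duality only exchanges finite-length modules with finite-length modules, so it does not by itself realize the MCM module $\Hom_R(X_{\syz^{d-1}k},R)$ as a $d$-th syzygy of something of finite length. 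Without an explicit $M$, the subsequent steps (ruling out $\pd_R M<\infty$ via freeness of $X_M$ and $\delta_R(\syz^{d-1}k)=0$, and upgrading ``up to free summands'' to an actual isomorphism) have nothing to act on, even though those two observations are in themselves sound and parallel what is needed.

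The paper's proof supplies exactly the missing construction, and by a different route: choose a system of parameters $\underline{x}=x_1,\dots,x_d$ with $x_i\notin\mathfrak{m}^2+(x_1,\dots,x_{i-1})$, set $R'=R/(\underline{x})$, and take $M=\syz^{d-1}_{R'}k$, which visibly has finite length and infinite projective dimension over $R$ by change of rings. The identification $X_M\cong X_{\syz^{d-1}_Rk}$ is then proved in two steps: a pull-back argument along short exact sequences whose middle term has finite projective dimension shows $X_A\cong\syz X_C$ up to free summands, and iterating it along the Koszul-type reduction gives $X_M\cong X_{\syz^{d-1}_Rk}$ up to free summands; then the special choice of $\underline{x}$ yields a surjection $\syz^{d-1}_Rk\to M$, so both delta invariants vanish by Lemma \ref{C2} and no free summands can occur, forcing the genuine isomorphism. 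If you want to salvage your duality-based strategy, you would have to prove that $\Hom_R(X_{\syz^{d-1}k},R)$ is a $d$-th syzygy of some finite-length module; the artinian reduction $R'$ is, in effect, the device the paper uses to manufacture such a module, and some substitute for it is indispensable.
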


\begin{proof}
Let $\underline{x}=x_1,\dots,x_d$ be a system of parameters of $R$ with $x_i\not\in \mathfrak{m}^2+(x_1,\dots,x_{i-1})$ for all $i$, and set $R'=R/(\underline{x})$. Put $M$ to be the $R$-module $\syz^{d-1}_{R'} k$. Then $\length_R M<\infty$ and $\pd_R M=\infty$ since  $\length_R R'<\infty$, $\pd_R R'<\infty$, and $\pd_R k=\infty$. We want to show that $X_M\cong X_{\syz^{d-1}_Rk}$. To prove this, it is enough to show that the following two claims hold.
\begin{claim}
Let $0\to A \to B \to C \to 0$ be an exact sequence of $R$-modules with $\pd_R B<\infty$. Then $X_A\cong X_{\syz C}\cong \syz X_C$ up to free summands. Consequently, $X_M\cong X_{\syz^{d-1}_R k}$ up to free summands.
\end{claim}
\begin{claim}
One has $\delta_R(M)=0=\delta_R(\syz^{d-1}_Rk)$.
\end{claim}
\begin{proof}[Proof of Claim 1]
There is an exact sequence $0 \to \syz B \to \syz C\oplus P \to A \to 0$ with some free module $P$. Let $W$ be a pull-back of $\syz B \to \syz C\oplus P$ and $p:X_{\syz C}\rightarrow \syz C$. Then the induced sequences $0 \to \Ker p \to W \to B \to 0$ and $0 \to W \to X_{\syz C} \to A \to 0$ are both exact. As $\pd_R B<\infty$ and $\pd_R (\Ker p)<\infty$, we have $\pd_R W<\infty$. So $X_{\syz C}$ is an MCM approximation of $A$ and isomorphic to $X_A$ up to free summands. Applying this argument repeatedly, we get $X_M \cong X_{\syz_R(\syz^{d-2}_{R'}k)}\cong \syz_R X_{\syz^{d-2}_{R'}k}\cong \syz^2_R X_{\syz^{d-3}_{R'}k}\cong\cdots\cong\syz^{d-1}_RX_k\cong X_{\syz^{d-1}_Rk}$ up to free summands.
\renewcommand{\qedsymbol}{$\square$}
\end{proof}
\begin{proof}[Proof of Claim 2]
By Lemma \ref{C2}, it is enough to show that there is an epimorphism $\syz^{d-1}_R k \to M$. We show this by induction on $d$. The case $d=1$ is trivial. So we assume $d>1$. Put $S=R/(x_1)$. Since $x_1\in \mathfrak{m}\setminus \mathfrak{m}^2$, one obtains 
$\syz^1_R k\otimes_R S\cong k\oplus \syz^1_S k$. So $\syz^{d-1}_R k\otimes_R S\cong \syz^{d-2}_Sk\oplus \syz^{d-1}_S k$. In particular, there is an epimorphism $\syz^{d-1}_R k \to \syz^{d-1}_S k$. By the hypothesis of induction, there is an epimorphism $\syz^{d-1}_S k \to M$. So we have an epimorphism $\syz^{d-1}_R k \to M$.
\renewcommand{\qedsymbol}{$\square$}
\end{proof}
The proof of the proposition is thus completed.
\end{proof}

Now we can give a proof of Proposition \ref{C}.

\begin{proof}[Proof of Proposition \ref{C}]
By Proposition \ref{C3} and Lemma \ref{C4}, there exists an $R$-module $M$ with $\pd_R M=\infty$, $\length_R M<\infty$, and $\rank_R X_M=\rank_R \syz^{d-1}k+1$. On the other hand, $\rank_R X_M=\rank_R \syz^d\Ext^d_R(M,R)$ by Lemma \ref{C3}. Since $M$ has finite length, $M'=\Ext^d(M,R)$ also has finite length. Since $\pd_R M=\infty$ and $R$ is Gorenstein, we see that $M'$ has infinite projective dimension. So $M'$ satisfies the condition of Proposition \ref{C}, that is, $\pd_R M'=\infty$, $\length_R M'<\infty$, and $\rank_R \syz^d M'(=\rank_R X_M)=\rank_R \syz^{d-1}k+1$. 
\end{proof}

\section{The Poincar\'e series of the residue field}

Throughout this section, $(R,\mathfrak{m},k)$ is a local ring with $\depth R>0$. So any $R$-module of finite length has rank $0$. For a finitely generated $R$-module $M$, we denote by $\beta_i(M)$ the $i$-th Betti number of $M$. Then the formal power series $P_M(t):=\sum_{i=0}^\infty \beta_i(M)t^i$ is called the {\it Poincar\'e series} of $M$. Since $\beta_0(k)=1$, there are integers $\varepsilon_i$ and an equality
\[
P_k(t)=\frac{\prod_{i=1}^\infty (1+t^{2i-1})^{\varepsilon_{2i-1}}}{\prod_{j=1}^\infty(1-t^{2j})^{\varepsilon_{2j}}};
\]

\noindent see \cite[Remark 7.1.1]{A}. For example, it holds that $\varepsilon_1=\beta_1(k)=\edim R$ and $\varepsilon_2=\beta_2(k)-\binom{\beta_1(k)}{2}$, where $\edim R$ stands for the embedding dimension of $R$. By \cite[Corollary 7.1.4]{A}, we have $\varepsilon_i \geq 0$ for all $i$. So there is a formal power series $Q(t)\in \mathbb{Z}[[t]]$ with non-negative coefficients and $Q(0)=1$ such that 
\[
P_k(t)=\frac{(1+t)^{\varepsilon_1}}{(1-t^2)^{\varepsilon_2}}Q(t).
\]
The equality $\rank_R \syz^i k +\rank_R \syz^{i+1} k=\beta_i(k)$ yields
\[
\sum_{i=1}^\infty (\rank_R \syz^i k)t^i=\frac{t}{1+t}P_k(t).
\]
So we have
\begin{align*}
\sum_{i=1}^\infty (\rank_R \syz^i k-\rank_R \syz^{i-1} k)t^i & =(1-t)\sum_{i=1}^\infty (\rank_R \syz^i k)t^i\tag{3.0.1}\\
& =t\frac{1-t}{1+t}P_k(t)=t\frac{(1+t)^{\varepsilon_1-2}}{(1-t^2)^{\varepsilon_2-1}}Q(t).
\end{align*}

From this equation, the main proposition of this section is deduced.

\begin{prop} \label{D}
The inequality
\[
\rank_R \syz^d k\leq \rank_R \syz^{d-1}k+1
\]
implies that $R$ is a hypersurface or that $d=1$.
\end{prop}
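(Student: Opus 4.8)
The plan is to read off the coefficient of $t^d$ in the identity (3.0.1) and estimate it from below. Recall from \cite[\S7.1]{A} that $\varepsilon_1=\edim R$, and that if $\widehat R=S/I$ is a minimal Cohen presentation ($S$ regular local, $I\subseteq\mathfrak m_S^2$) then $\varepsilon_2=\mu(I)$; in particular $R$ is a hypersurface exactly when $\varepsilon_2\le 1$. So I may assume $R$ is not a hypersurface, whence $\varepsilon_2\ge 2$, and (the case $d=1$ being trivial) that $d\ge 2$. It then suffices to prove
\[
\rank_R\syz^d k-\rank_R\syz^{d-1} k\ \ge\ 2 ,
\]
which contradicts the assumed inequality $\rank_R\syz^d k\le\rank_R\syz^{d-1} k+1$; by (3.0.1) the left-hand side of the display is the coefficient of $t^{d-1}$ in $\dfrac{(1+t)^{\varepsilon_1-2}}{(1-t^2)^{\varepsilon_2-1}}\,Q(t)$.

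Since $R$ is not a hypersurface it is not regular, so $\edim R\ge\dim R+1$; thus $\varepsilon_1-2\ge d-1\ge 1$ and $\varepsilon_2-1\ge 1$. Hence each of $(1+t)^{\varepsilon_1-2}$, $(1-t^2)^{-(\varepsilon_2-1)}$ and $Q(t)$ has non-negative coefficients (for $Q(t)$ by \cite[Corollary 7.1.4]{A}) and constant term $1$, so their product dominates $(1+t)^{\varepsilon_1-2}$ coefficientwise, and therefore
\[
\rank_R\syz^d k-\rank_R\syz^{d-1} k\ \ge\ \binom{\varepsilon_1-2}{d-1}.
\]
If in addition $\edim R\ge\dim R+2$, i.e. $\varepsilon_1-2\ge d$, then $\binom{\varepsilon_1-2}{d-1}\ge\binom{d}{d-1}=d\ge 2$, which finishes the proof. (When $d\ge 3$ the weaker bound $\edim R\ge\dim R+1$ already works: retaining also the degree-$2$ term of $(1-t^2)^{-(\varepsilon_2-1)}$ adds the summand $(\varepsilon_2-1)\binom{\varepsilon_1-2}{d-3}\ge 1$, which together with $\binom{\varepsilon_1-2}{d-1}\ge 1$ gives $\ge 2$.)

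The crux, and the step I expect to be the main obstacle, is the embedding-codimension estimate $\edim R\ge\dim R+2$ for a non-hypersurface $R$; this requires more than just $\varepsilon_2\ge 2$. I would deduce it from the Cohen--Macaulay property of $R$---which holds in Theorem~\ref{A} since Gorenstein rings are Cohen--Macaulay: if a Cohen--Macaulay local ring had embedding codimension at most one, the defining ideal $I$ in a minimal Cohen presentation would be unmixed of height at most one in the regular, hence factorial, ring $S$, so $I=(p_1^{a_1}\cdots p_r^{a_r})$ would be principal and $R$ a hypersurface. Thus a Cohen--Macaulay non-hypersurface satisfies $\edim R\ge\dim R+2$, and the estimate above concludes the argument. (If $R$ is only assumed to have positive depth this codimension bound can fail in dimension two, so for the proof one wants $R$ Cohen--Macaulay; this is no loss, as Proposition~\ref{C} is applied over Gorenstein rings.)
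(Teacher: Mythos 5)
Your argument is correct and is essentially the paper's: pass to (3.0.1), use non-negativity of the coefficients of $(1-t^2)^{-(\varepsilon_2-1)}$ and $Q(t)$ to bound the relevant coefficient below by $\binom{\varepsilon_1-2}{d-1}$, and conclude $\rank_R\syz^dk-\rank_R\syz^{d-1}k\geq d\geq 2$ once $\varepsilon_1=\edim R\geq d+2$ and $\varepsilon_2\geq 2$. The one point where you go beyond the paper is the step you rightly call the crux: the paper simply asserts $\edim R\geq d+2$ for a non-hypersurface with $d\geq 2$, while you prove it, and your proof correctly uses the Cohen--Macaulay property (unmixedness of the defining ideal $I$ plus factoriality of $S$ forces a height-one $I$ to be principal). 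Your caution is warranted: under the blanket hypothesis of Section 3 ($\depth R>0$ only) the bound can fail, e.g.\ $R=k[[x,y,z]]/(xy,xz)$ has $\depth R=1$, $d=2$, $\edim R=3$, and one checks $\rank_R\syz^1k=1$ and $\rank_R\syz^2k=2$, so it even satisfies the hypothesis of the proposition without being a hypersurface; thus some hypothesis such as Cohen--Macaulayness is genuinely needed, and, as you note, this costs nothing for Theorem \ref{A}, where $R$ is Gorenstein. Your parenthetical refinement for $d\geq 3$ (keeping the $t^2$-term of $(1-t^2)^{-(\varepsilon_2-1)}$ so that $\edim R\geq d+1$ suffices) is also correct and does not appear in the paper.
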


\begin{proof}
Since completion does not change the Betti numbers of $k$, we may assume that $R$ is complete. Then $R$ admits a presentation $R=S/I$ with a regular local ring $(S,\mathfrak{n})$ and an ideal $I\subset \mathfrak{n}^2$ of $S$. By \cite[Theorem 2.3.2]{BH}, the number $\varepsilon_2=\beta_2(k)-\binom{\mathsf{ediim}R}{2}$ is equal to $\beta_0^S(I)$. Now we assume that $R$ is not a hypersurface and $d\geq 2$. Therefore one has $\varepsilon_1=\edim R\geq d+2$ and $\varepsilon_2 \geq 2$. The formal power series $Q'(t)=\frac{1}{(1-t^2)^{\varepsilon_2-1}}Q(t)$ also has non-negative coefficients and satisfies $Q'(0)=1$, because $\varepsilon_2\geq 1$. As a consequence of these observations, we see that
\begin{align*}
\text{the $d$-th coefficient of }t\frac{(1+t)^{\varepsilon_1-2}}{(1-t^2)^{\varepsilon_2-1}}Q(t) &=\text{the $(d-1)$-th coefficient of }(1+t)^{\varepsilon_1-2}Q'(t)\\
&\geq \binom{\varepsilon_1-2}{d-1}\geq \binom{d}{d-1}\geq d\geq 2.
\end{align*}
Combining this with the equation (3.0.1), we obtain $\rank_R\syz^dk-\rank_R\syz^{d-1}k\geq 2$.
\end{proof}

Now we can easily see that Proposition \ref{C} and \ref{D} implies Theorem \ref{A}.

\begin{proof}[Proof of Theorem \ref{A}]
Assume that Question \ref{Q} has an affirmative answer. Proposition \ref{C} yields that the inequality $\rank_R \syz^d k\leq \rank_R \syz^{d-1} k+1$ holds. Then the ring $R$ needs to be a hypersurface because of the consequence of Proposition \ref{D}.
\end{proof}

\section{The case of dimension two}

The aim of this section is to prove Theorem \ref{B}. In this section, $(R,\mathfrak{m},k)$ is a Cohen-Macaulay local ring of dimension $d>0$. Let $M$ be an $R$-module of finite length and $\phi:R^{\oplus n}\rightarrow R^{\oplus m}$ be a homomorphism of free $R$-modules such that $\Cok \phi=M$. We denote by $I_m(\phi)$ the ideal of $R$ generated by $m$-minors of $\phi$. Taking a non-maximal prime ideal $\mathfrak{p}$ of $R$, we have $M_{\mathfrak{p}}=0$. So $\phi_\mathfrak{p}:R_\mathfrak{p}^{\oplus n}\to R_\mathfrak{p}^{\oplus m}$ is surjective and $n\geq m$. Moreover, $(I_m(\phi))_\mathfrak{p}=I_m(\phi_\mathfrak{p})$ is equal to $R_\mathfrak{p}$. Consequently, $I_m(\phi)$ is an $\mathfrak{m}$-primary ideal of $R$.

To prove Theorem \ref{B}, we want to estimate the rank of $\syz^2 M$. It follows immediately from the exactness of $0\to \syz^2 M \to R^{\oplus n} \to R^{\oplus m} \to M \to 0$ that $\rank_R \syz^2 M=n-m$. We can evaluate the number $n-m$ from the next two propositions.

\begin{prop}\cite[Theorem 3]{EN}. \label{E1}
Let $\phi:R^{\oplus n}\to R^{\oplus m}$ be a homomorphism of free $R$-modules. Then for each integer $0\leq t\leq \min\{m,n\}$, we have
$
\mathsf{ht}~I_t(\phi)\leq (m-t+1)(n-t+1).
$
\end{prop}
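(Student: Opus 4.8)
The plan is to prove, by induction on $m+n$, the formally stronger assertion that \emph{every} minimal prime $\mathfrak p$ of $I_t(\phi)$ satisfies $\height\mathfrak p\le (m-t+1)(n-t+1)$. Localizing at such a $\mathfrak p$, one may assume $(R,\mathfrak m)$ is local with $\mathfrak m$ minimal over $I_t(\phi)$, so that $I_t(\phi)$ is $\mathfrak m$-primary and the goal becomes $\dim R\le (m-t+1)(n-t+1)$. We may assume $t\ge 1$. The case $t=1$ is immediate: $I_1(\phi)$ is generated by the $mn$ entries of $\phi$, so Krull's height theorem gives $\dim R\le mn=(m-1+1)(n-1+1)$. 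This also anchors the induction (the cases with $\min\{m,n\}\le 1$ being of this type).

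For the inductive step, suppose first that some entry of $\phi$ is a unit in $R$. Then elementary row and column operations---which change neither the ideals $I_i(\phi)$ nor the hypotheses on $\mathfrak m$---bring $\phi$ into the block form $\left(\begin{smallmatrix}1&0\\0&\psi\end{smallmatrix}\right)$ with $\psi$ an $(m-1)\times(n-1)$ matrix, and a Laplace expansion shows $I_t(\phi)=I_{t-1}(\psi)$. Since $\mathfrak m$ is then minimal over $I_{t-1}(\psi)$ and $(m-1)+(n-1)<m+n$, the induction hypothesis applied to $\psi$ yields $\dim R\le\bigl((m-1)-(t-1)+1\bigr)\bigl((n-1)-(t-1)+1\bigr)=(m-t+1)(n-t+1)$, as needed.

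It remains---and this is the only substantial case---to handle the situation where every entry of $\phi$ lies in $\mathfrak m$. Here I would introduce a generic $m\times n$ matrix $Z=(z_{ij})$ of indeterminates, set $T=R[Z]$, and study the perturbation $\phi+Z$ over $T$. The key identity is $R\cong T/(Z)$, the $z_{ij}$ forming a regular sequence, so $\height_T(\mathfrak m T+(Z))=\dim R+mn$; moreover $\mathfrak m T+(Z)$ is minimal over $I_t(\phi+Z)+(Z)$. Since $R$ is Cohen--Macaulay, so is $T$, and the dimension formula in $T_{\mathfrak m T+(Z)}$ gives $\height_T(\mathfrak m T+(Z))\le\height_T\mathfrak P+mn$ for a minimal prime $\mathfrak P$ of $I_t(\phi+Z)$ contained in $\mathfrak m T+(Z)$ (the $+mn$ coming via Krull from the extra generators $(Z)$). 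So it suffices to bound $\height_T\mathfrak P$ by $(m-t+1)(n-t+1)$, and for this I claim no minimal prime $\mathfrak Q$ of $I_t(\phi+Z)$ contains $I_1(\phi+Z)$ when $t\ge 2$. Indeed, otherwise $\mathfrak Q$ would be minimal over $I_1(\phi+Z)$, hence of height $mn$ since $T/I_1(\phi+Z)\cong R$; but if $\mathfrak p_0$ is the minimal prime of $R$ corresponding to $\mathfrak Q$, then $I_t(\phi+Z)+\mathfrak p_0T$ is a prime strictly between $I_t(\phi+Z)$ and $\mathfrak Q$---it is, after translation, the generic determinantal ideal over the domain $R/\mathfrak p_0$, of height $(m-t+1)(n-t+1)<mn$---contradicting minimality of $\mathfrak Q$. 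Hence $\phi+Z$ acquires a unit entry after localizing at any such $\mathfrak Q$, and the unit-entry reduction of the previous paragraph, applied over the Cohen--Macaulay local ring $T_{\mathfrak Q}$ together with the induction hypothesis, bounds $\height_T\mathfrak Q$ by $(m-t+1)(n-t+1)$; in particular this holds for $\mathfrak P$, and we are done.

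The main obstacle is exactly this last case. The unit-entry reduction is routine linear algebra over a local ring, but making the generic perturbation work draws on two inputs: the Cohen--Macaulay (hence catenary, dimension-formula) behaviour of $R[Z]$, needed to transport height estimates across the quotient $T\twoheadrightarrow T/(Z)=R$; and the classical fact that the generic determinantal ideal $I_t(Z)$ over a domain has height $(m-t+1)(n-t+1)$---equivalently, that the variety of $m\times n$ matrices of rank $\le t-1$ has codimension $(m-t+1)(n-t+1)$---which is the geometric content of the proposition and the one point where a genuinely global computation enters.
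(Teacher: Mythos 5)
The paper does not prove this statement at all: it is quoted verbatim from Eagon--Northcott \cite[Theorem 3]{EN}, so the only comparison to make is with the classical proof, and your argument is essentially the standard one (localize at a minimal prime, kill a unit entry to drop to $I_{t-1}$ of an $(m-1)\times(n-1)$ matrix, and when all entries lie in $\mathfrak m$ pass to a generic perturbation $\phi+Z$ over $R[Z]$; this is close to the proof in Bruns--Vetter, \emph{Determinantal Rings}, Theorem 2.1). I find it correct in the setting the paper actually needs, with three remarks. First, you use Cohen--Macaulayness in an essential way: the step $\mathsf{ht}(\mathfrak m T+(Z))\le \mathsf{ht}\,\mathfrak P+mn$ rests on the formula $\mathsf{ht}\,\mathfrak P+\dim A/\mathfrak P=\dim A$ in the localization of $R[Z]$, which is exactly what CM (catenary and equidimensional) buys you; since Section 4 of the paper assumes $R$ Cohen--Macaulay, and localizations and polynomial extensions preserve this, your induction is self-consistent, but as written it does not recover the full Noetherian generality of the cited theorem -- worth flagging, though harmless here. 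Second, in the claim that no minimal prime $\mathfrak Q$ of $I_t(\phi+Z)$ contains $I_1(\phi+Z)$, the justification of strictness by ``height $(m-t+1)(n-t+1)<mn$'' compares heights computed in different rings and is not needed: it is cleaner to note that modulo $\mathfrak p_0T$ the two ideals become $I_t$ and $I_1$ of a generic matrix over the nonzero ring $R/\mathfrak p_0$, which differ for $t\ge 2$ (e.g.\ for degree reasons), so $I_t(\phi+Z)+\mathfrak p_0T\subsetneq\mathfrak Q$; what you do genuinely need there is that $I_t$ of a generic matrix over the domain $R/\mathfrak p_0$ is prime, which follows from the field case by $\mathbb Z$-freeness of the generic determinantal ring and should be cited alongside the codimension formula. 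Third, $t=0$ should be excluded by the usual convention $I_0=R$ (as should $I_t(\phi)=R$, where the height statement is vacuous). With these small repairs your write-up is a legitimate self-contained substitute, under the paper's CM hypothesis, for the external reference.
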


\begin{prop}\cite[Corollary 2.7]{BR}. \label{E2}
Let $\phi:R^{\oplus n}\to R^{\oplus m}$ be a homomorphism of free $R$-modules. Assume $n\geq m$. If $\mathsf{grade}~ I_m(\phi)=m-n+1$, then $\pd_R (\Cok \phi)=m-n+1$. 
\end{prop}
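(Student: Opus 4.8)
To determine $\pd_R\Cok\phi$ from the grade of $I_m(\phi)$, the plan is to exhibit an explicit free resolution of $\Cok\phi$ of length exactly $n-m+1$ and then bracket the projective dimension between matching upper and lower bounds. As a first observation, Proposition \ref{E1} applied with $t=m$ gives $\height I_m(\phi)\le (m-m+1)(n-m+1)=n-m+1$, and since grade never exceeds height one has $\mathsf{grade}\,I_m(\phi)\le n-m+1$. Thus the hypothesis asserts that the grade of $I_m(\phi)$ is as large as it can possibly be, and it is exactly in this extremal situation that a canonical resolution becomes available.

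The resolution I would use is the Buchsbaum--Rim complex $\mathcal{B}(\phi)$ associated to the map $\phi\colon F=R^{\oplus n}\to G=R^{\oplus m}$. It augments the presentation $R^{\oplus n}\xrightarrow{\phi}R^{\oplus m}\to\Cok\phi\to 0$: in homological degrees $0$ and $1$ its terms are $G$ and $F$ with differential $\phi$, while for $j\ge 2$ the degree-$j$ term is built from $\bigwedge^{m+j-1}F$ tensored with a suitable (divided or symmetric) power of $G^{*}$. Because $\bigwedge^{i}F=0$ for $i>n$, the complex terminates in degree $n-m+1$, where the top term involves $\bigwedge^{n}F$. Hence $\mathcal{B}(\phi)$ is a complex of finitely generated free modules of length $n-m+1$ whose zeroth homology is $\Cok\phi$.

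The heart of the argument, and the step I expect to be the main obstacle, is to prove that $\mathcal{B}(\phi)$ is acyclic under the grade hypothesis. For this I would invoke the Buchsbaum--Eisenbud acyclicity criterion: a complex of free modules $\cdots\to F_j\xrightarrow{d_j}F_{j-1}\to\cdots$ is acyclic if and only if at every spot $\rank d_{j+1}+\rank d_j=\rank F_j$ and the ideal $I_{\rank d_j}(d_j)$ of minors of $d_j$ of size $\rank d_j$ has grade at least $j$. For the Buchsbaum--Rim complex the rank identities are formal consequences of the combinatorics of exterior and symmetric powers, and a direct computation shows that each of the required grade inequalities is implied by the single inequality $\mathsf{grade}\,I_m(\phi)\ge n-m+1$, which is in force by hypothesis. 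Faithfully carrying out this rank-and-grade bookkeeping is the technical core; granting it, $\mathcal{B}(\phi)$ is a free resolution of $\Cok\phi$ and therefore $\pd_R\Cok\phi\le n-m+1$.

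For the reverse inequality I would pass to the annihilator. The ideal $I_m(\phi)$ is the zeroth Fitting ideal of $\Cok\phi$, so $I_m(\phi)\subseteq\operatorname{Ann}_R(\Cok\phi)\subseteq\sqrt{I_m(\phi)}$; the two ideals therefore share a radical, hence a grade, namely $n-m+1$. Since $\pd_R\Cok\phi$ is now known to be finite, the standard bound $\mathsf{grade}\,\operatorname{Ann}_R(\Cok\phi)\le\pd_R\Cok\phi$ --- which holds because $\mathsf{grade}\,\operatorname{Ann}_R(\Cok\phi)$ equals the least index $i$ with $\Ext^i_R(\Cok\phi,R)\ne 0$ --- yields $n-m+1\le\pd_R\Cok\phi$. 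Combining the two estimates gives $\pd_R\Cok\phi=n-m+1$, as required.
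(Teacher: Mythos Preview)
The paper does not prove this proposition at all: it is stated with the citation \cite[Corollary 2.7]{BR} and used as a black box in the proof of Theorem~\ref{B}. Your proposal is a correct reconstruction of the classical argument behind that citation --- build the Buchsbaum--Rim complex, verify acyclicity via the Buchsbaum--Eisenbud criterion under the maximal-grade hypothesis, and bound projective dimension from below by $\mathsf{grade}\,\operatorname{Ann}_R(\Cok\phi)$ --- so there is no discrepancy to discuss beyond the fact that you have supplied a proof where the paper simply invokes the literature.
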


Using these proposition, we can give a proof of Theorem \ref{B}.

\begin{proof}[Proof of Theorem \ref{B}]
We recall that the ideal $I_m(\phi)$ is $\mathfrak{m}$-primary. Proposition \ref{E1} yields that $n-m+1\geq \mathsf{ht}~I_m(\phi)\geq \mathsf{grade}~I_m(\phi)=d$. Proposition \ref{E2} says that if $M$ has infinite projective dimension, then $d\not=n-m+1$. So we have $d<n-m+1$. This inequality is equivalent to the inequality $d\leq n-m (=\rank_R \syz^2 M)$. The argument above implies that 
\[
d\leq \min\{\rank_R \syz^2 M|\pd_R M=\infty\text{ and }\length_R M<\infty\}.
\]
Now we assume that $R$ is a hypersurface and $d=2$. Then $\rank \syz^2 k=\beta_1(k)-\beta_0(k)=\edim R-1=2$. The inequality above shows that Question \ref{Q} is affirmative.

The ``only if" part follows from Theorem \ref{A}.
\end{proof}

\begin{ac}
The author is grateful to his supervisor Ryo Takahashi for giving him helpful advice throughout the paper. The author also very much thanks Ragnar Buchweitz for checking the first draft of the paper and giving useful comments.
\end{ac}


\end{document}